\newtheorem{theorem}{Theorem}[section]
\theoremstyle{definition}
\theoremstyle{remark}
\newtheorem{remark}[theorem]{Remark}
\numberwithin{equation}{section}
\def\R{\mathbb{R}}
\numberwithin{equation}{section} %% Comment out for sequentially-numbered
\numberwithin{figure}{section} %% Comment out for sequentially-numbered
  \theoremstyle{plain}
  \theoremstyle{remark}
  \theoremstyle{definition}
\providecommand{\keywords}[1]
{
\small	
\textbf{\textit{Keywords. }} #1
}
\begin{document}

\title[Classification of geodesics on a cone in space]{Classification of geodesics on a cone in space}
\author[H\'ector Efr\'en Guerrero Mora]
{H\'ector Efr\'en Guerrero Mora}
\date{\today}

\address{Departamento de Matem\'aticas.
	Universidad del Cauca \\ Facultad de Ciencias Naturales Exactas y de la Educaci\'on. Popay\'an, Colombia}
\email{heguerrero@unicauca.edu.co}
\thanks{The author was supported in part by Universidad del Cauca project ID 4558.}
%\curraddr{Department of Mathematics\\
%Central Connecticut State University\\
%New Britain, CT 06050\\}
%
%
%\email{ perdomoosm@ccsu.edu}

\begin{abstract}

This article presents a new way to classify geodesics on a cone in the Euclidean 3-space. This proof is obtained considering our main result, which establishes the necessary and sufficient conditions that a curve in space must satisfy: to be rectifying curve or that its trace is contained in the sphere centered at the origin.
\end{abstract}
%%%%%%%%%%%%%%%%%%%%%%%%%
%    General info
\subjclass[2000]{Primary 53A04; Secondary 53A55}
%\date{January 1, 2001 and, in revised form, June 22, 2001.}
\keywords{Geodesics, cones in space, rectifying curve, slant helices}
\dedicatory{To my family.}
\maketitle

%%%%%%%%%%%%%%%%%%%%%%5
\section{INTRODUCTION.}
The geodesics curves on cones have been topics of interest in classical differential geometry. Curves have recently been defined in space, which help to classify the geodesics on the cone, between these curves are the rectifying curves and the slant helix.
The notion of rectifying curve has been introduced by Chen and is defined as a unit speed curve such that position vector always lies in its rectifying plane \cite{ChenBY:03}.
Bang-Yen Chen showed that a curve on a cone, not necessarily a circular one, is a geodesic if and only if it is either a rectifying curve or an open portion of a ruling \cite{BANG-Y-C:07}. In this article, we will give a different demonstration of this important result, based on the necessary and sufficient conditions that a curve in space must satisfy: to be rectifying curve or that its trace is contained in the sphere centered at the origin.
The notion of slant helix was introduced by Izuyama and Takeuchi and is defined as a unit speed curve and non-zero curvature such that the principal normal lines of it make a constant angle with a fixed direction \cite{IzTa:04}.\\
We show that a curve with curvature greater than zero in a right circular cone is a geodesic if and only if it is a rectifying curve and a slant helix at the same time.
Other researchers have recently obtained a family of rectifying slant helices in Euclidean 3-space that live in cones \cite{BAFKALK:05}. In this article we find, using a different path as the other researchers have done, the parametrization by arc length of the curve that satisfies the property of being a rectifying curve and slant helix at the same time.
\section{Some condition on the rectifying curves.}
We will consider the general cones as presented in \cite{BANG-Y-C:07}.
 A general cone with vertex at the origin $0\in \R^3$ is given as follows:\\
For a given curve $\textbf{y}=\textbf{y}(t)$ defined on an open interval $I$ lying on the unit sphere $S_{0}^2(1)$ centered at $0\in \R^3$. Let $C_{\textbf{y}}$ denoted the cone with vertex at $0\in \R^3$ over the curve $\textbf{y}$. \\$C_{\textbf{y}}$   can be parametrized as
\begin{equation*}
C_{\textbf{y}}(t,u)=u\textbf{y}(t), \  u\in\R^+.
\end{equation*} For $t_0\in I$, the curve $\beta(u)=C_{\textbf{y}}(t_0,u)$, $u\in \R^+$, is called a ruling (or generating half line).
  The cone $C_{\textbf{y}}$ is generated by the rulings.\\Now, given a curve $\alpha:I\rightarrow \R^3$, with curvature greater than zero, we will show that\\
$\alpha$ is a geodesic curve, on an arbitrary cone with vertex at the origin if and only if $\alpha$ is a rectifying curve. To achieve this result, we consider the following theorem.
%\newpage
\begin{theorem}\label{teoremauno}
Let $\alpha:I\rightarrow \R^3$ be a curve parametrized by arc length, with curvature greater than zero. Then  $\alpha$ is a rectifying curve \'{o} $\alpha(I)$ is contained in a sphere centered at the origin if and only if  $\alpha\times\frac{d\alpha}{ds}$ has constant magnitude nonzero.
\end{theorem}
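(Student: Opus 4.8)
The plan is to work in the Frenet frame of $\alpha$. Write $T=\tfrac{d\alpha}{ds}$; since $\kappa>0$ the principal normal $N$ and the binormal $B=T\times N$ are well defined, with $T'=\kappa N$, $N'=-\kappa T+\tau B$, $B'=-\tau N$ (here $'$ is $\tfrac{d}{ds}$). Decompose the position vector as $\alpha=aT+bN+cB$, where $a=\langle\alpha,T\rangle$, $b=\langle\alpha,N\rangle$, $c=\langle\alpha,B\rangle$. By definition $\alpha$ is a rectifying curve exactly when $b\equiv0$; and since $(\abs{\alpha}^{2})'=2\langle\alpha,\alpha'\rangle=2a$, the trace $\alpha(I)$ is contained in a sphere centered at the origin exactly when $\abs{\alpha}$ is constant, i.e.\ exactly when $a\equiv0$ (the radius being then positive, since an arc-length curve is non-constant). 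So the assertion to prove becomes: \emph{$a\equiv0$ or $b\equiv0$ if and only if $\abs{\alpha\times T}$ is a nonzero constant.}

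The first step is to differentiate $\alpha\times T$. From $\alpha'=T$ and $T'=\kappa N$ one gets $\tfrac{d}{ds}(\alpha\times T)=\kappa\,(\alpha\times N)$, and hence, using $\langle x\times y,\,x\times z\rangle=\abs{x}^{2}\langle y,z\rangle-\langle x,z\rangle\langle y,x\rangle$ together with $\langle T,N\rangle=0$,
\begin{equation*}
\frac{d}{ds}\,\abs{\alpha\times T}^{2}\;=\;2\kappa\,\langle\alpha\times T,\ \alpha\times N\rangle\;=\;-2\kappa\,ab .
\end{equation*}
(The same formula also drops out of $\abs{\alpha\times T}^{2}=\abs{\alpha}^{2}-a^{2}=b^{2}+c^{2}$ after differentiating with the Frenet equations.) Since $\kappa>0$, it follows that $\abs{\alpha\times T}$ is a nonzero constant precisely when $ab\equiv0$ on $I$ and $\abs{\alpha\times T}^{2}$ is not identically $0$. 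One direction of the reformulated claim is now immediate: if $a\equiv0$ then $\abs{\alpha\times T}^{2}=\abs{\alpha}^{2}$ is a positive constant; and if $b\equiv0$ then $c'=-\tau b\equiv0$, so $c$ is constant, with $c\neq0$ (otherwise $\alpha=aT$, so $T=\alpha'=a'T+a\kappa N$ gives $a\kappa\equiv0$, hence $a\equiv0$ and $\alpha\equiv0$, which is impossible), and therefore $\abs{\alpha\times T}^{2}=c^{2}$ is a nonzero constant.

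The substance of the proof is the converse: from $ab\equiv0$ on $I$ one must deduce $a\equiv0$ or $b\equiv0$. This is false for arbitrary smooth functions, so the Frenet relation $a'=\langle T,T\rangle+\langle\alpha,T'\rangle=1+\kappa b$ has to be used. I would argue by contradiction. Suppose $a\not\equiv0$ and $b\not\equiv0$, and set $V=\{s\in I: b(s)\neq0\}$, a nonempty open subset of $I$. On $V$, $ab\equiv0$ forces $a\equiv0$, hence $a'\equiv0$ on the open set $V$, hence $1+\kappa b\equiv0$, i.e.\ $b=-1/\kappa$ on $V$. Now let $(\sigma,\rho)$ be a connected component of $V$. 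Since $\kappa$ is continuous and strictly positive, $b=-1/\kappa$ has a nonzero one-sided limit at each finite endpoint of $(\sigma,\rho)$; so if $\sigma$ were not an endpoint of $I$, continuity of $b$ would give $b(\sigma)=-1/\kappa(\sigma)\neq0$, i.e.\ $\sigma\in V$, contradicting the fact that $\sigma$ is a boundary point of a component of the open set $V$. The same holds for $\rho$. Hence both endpoints of $(\sigma,\rho)$ are endpoints of $I$, so $(\sigma,\rho)$ is the whole interior of $I$; then $b\neq0$ throughout the interior of $I$, so $a\equiv0$ there, and by continuity $a\equiv0$ on $I$, contradicting $a\not\equiv0$. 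Therefore $a\equiv0$ or $b\equiv0$, which together with the computation above proves the theorem.

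The step I expect to be the main obstacle is precisely this dichotomy, i.e.\ passing from the pointwise identity $ab\equiv0$ to the global alternative ``$a\equiv0$ or $b\equiv0$''. The mechanism that makes it go through is that on the set where $b$ is nonzero the component $a$ must vanish identically, which pins $b$ down there to the \emph{explicit} continuous value $-1/\kappa$; continuity of $-1/\kappa$ then forbids $\{b\neq0\}$ from having any boundary in the interior of $I$.
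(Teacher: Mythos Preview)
Your argument follows the same computational spine as the paper: both differentiate $\lvert\alpha\times T\rvert^{2}$ and reduce the question to the pointwise identity $(\alpha\cdot T)(\alpha\cdot N)\equiv0$. The paper, however, simply passes from this product condition to ``$\alpha\cdot N\equiv0$ or $\alpha\cdot T\equiv0$'' without further comment; you correctly recognize that this step is not automatic for arbitrary smooth functions and supply a genuine argument, using the Frenet relation $a'=1+\kappa b$ (in your notation) to show that the open set $\{b\neq0\}$ cannot have a boundary point interior to $I$. That is a real improvement in rigor. For the forward direction (rectifying $\Rightarrow$ constant nonzero magnitude) the paper quotes Chen's explicit parametrization of a rectifying curve and reads off that $\alpha\times T$ is a constant multiple of $N$, whereas you argue directly from $c'=-\tau b$; both work, yours being self-contained.
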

\begin{proof}
If $\alpha$ is a rectifying curve, we can write \begin{equation*}\alpha(s)=(s+\frac{b}{a})\textbf{t}(s)+\frac{1}{a}\textbf{b}(s),\end{equation*}where $a$, $b$ are constants, $a\neq 0,$\cite{ChenBY:03}
\\ then
\begin{equation}\label{curvarectificante}
(\alpha\times\frac{d\alpha}{ds})(s)=[(s+\frac{b}{a})\textbf{t}(s)+\frac{1}{a}\textbf{b}(s)]\times\textbf{t}(s)=\frac{1}{a}
\textbf{n}(s),
\end{equation}
from this it follows that $\alpha\times\frac{d\alpha}{ds}$   has constant magnitude nonzero.\\
On the other hand, if $\alpha(I)$ is contained in a sphere centered at the origin, then $\mid\mid\alpha(s)\mid\mid$ is a constant and $\alpha(s)\cdot\textbf{t}(s)=0$, this implies that $\alpha\times\frac{d\alpha}{ds}$ has constant magnitude nonzero.\\
 Conversely, suppose that $\gamma=\alpha\times\frac{d\alpha}{ds}$ has constant magnitude nonzero.
  By differentiating  $\delta^2=(\alpha\times\frac{d\alpha}{ds}).(\alpha\times\frac{d\alpha}{ds})$, we obtain
  \begin{eqnarray*}
  0&=&2(\alpha\times\frac{d\alpha}{ds})'.(\alpha\times\frac{d\alpha}{ds})=2(\frac{d\alpha}{ds}\times\frac{d\alpha}{ds}+\alpha\times\frac{d^2\alpha}{ds^2}).(\alpha\times\frac{d\alpha}{ds})\\
  &=&2(\alpha\times\frac{d^2\alpha}{ds^2}).(\alpha\times\frac{d\alpha}{ds})\\
  &=&2\kappa_{\alpha}(\alpha\times\textbf{n}).(\alpha\times\textbf{t})=2\kappa_{\alpha}[(\alpha.\alpha)(\textbf{n}.\textbf{t})-(\alpha.\textbf{t})(\textbf{n}.\alpha)]\\
  &=&-2\kappa_{\alpha}(\alpha.\textbf{t})(\textbf{n}.\alpha).
  \end{eqnarray*}Since $\kappa_{\alpha}$ is nonzero, we have to \begin{equation*}\textbf{n}.\alpha=0\ \ \text{\'{o}}\ \  \alpha.\textbf{t}=0.\end{equation*}
  From this it follows that $\alpha$ is a rectifying curve \'{o} $\alpha(I)$ is contained in a sphere centered at the origin.
\end{proof}
\section{Geodesic curves on a cone with vertex at the origin}
  \begin{remark}
  Let $C_{\textbf{y}}$ be an arbitrary cone with vertex at the origin parametrized by $X(t,u)=u\textbf{y}(t)$, where $u\in \R^+$ and $\textbf{y}=\textbf{y}(t)$ is a curve on the unit sphere centered at the origin, which without loss of generality we can assume that $\textbf{y}(t)$ is a unit speed curve. Let $\alpha=\alpha(s)$ be a curve parametrized by arc length contained in a coordinate neighborhood of $X$, this is $\alpha(s)=X(t(s),u(s))$, for some $t=t(s)$ and $u=u(s)$ differentiable functions.
Then \begin{equation*}
\alpha\times\frac{d\alpha}{ds}=u^2\frac{dt}{ds}(\textbf{y}\times \frac{d\textbf{y}}{dt})
 =-u^2\frac{dt}{ds}\mid\mid (\frac{d\textbf{y}}{dt}\times\textbf{ y}) \mid\mid \textbf{N}=-u^2\frac{dt}{ds}\textbf{N},\end{equation*} where
 $\textbf{N}=\textbf{N}(t,u)$ is the normal vector of the cone along the curve $\alpha$.
  \end{remark}
%\newpage
\begin{theorem}\label{cono}
Let $\alpha:I\rightarrow \R^3$ be a curve, with curvature greater than zero.\\
$\alpha$ is a geodesic curve, on a general cone with vertex at the origin if and only if $\alpha$ is a rectifying curve.
\end{theorem}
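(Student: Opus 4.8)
The plan is to convert the geodesic condition into the single requirement that the function $u^{2}\tfrac{dt}{ds}$ be a nonzero constant, and then feed this into Theorem \ref{teoremauno} through the Remark. Since both ``geodesic'' and ``rectifying'' are invariant under reparametrization and are local notions, I assume $\alpha$ is parametrized by arc length and, locally, that $\alpha(s)=u(s)\mathbf{y}(t(s))$ for a unit speed spherical curve $\mathbf{y}$, so that the Remark gives $\alpha\times\alpha'=-u^{2}\tfrac{dt}{ds}\,\mathbf{N}$ with $\mathbf{N}=\mathbf{y}'\times\mathbf{y}$ the unit normal of the cone along $\alpha$ (here $'$ denotes $\tfrac{d}{ds}$ on curves and $\tfrac{d}{dt}$ on $\mathbf{y}$). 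Throughout I use the standard fact that, since $\kappa_{\alpha}>0$, $\alpha$ is a geodesic if and only if its principal normal satisfies $\mathbf{n}=\pm\mathbf{N}$.

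For the forward implication, suppose $\alpha$ is a geodesic. From $(\alpha\times\alpha')'=\alpha\times\alpha''=\kappa_{\alpha}(\alpha\times\mathbf{n})=\pm\kappa_{\alpha}(\alpha\times\mathbf{N})$ and the elementary identity $\alpha\times\mathbf{N}=u\mathbf{y}'$, together with $\alpha\times\alpha'=-u^{2}\tfrac{dt}{ds}(\mathbf{y}'\times\mathbf{y})$, one computes $\tfrac{d}{ds}\lVert\alpha\times\alpha'\rVert^{2}=\pm2\kappa_{\alpha}u\,(\alpha\times\alpha')\cdot\mathbf{y}'=0$, because $(\mathbf{y}'\times\mathbf{y})\cdot\mathbf{y}'=0$. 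Hence $\lVert\alpha\times\alpha'\rVert$ is constant; it is nonzero, for otherwise $\tfrac{dt}{ds}\equiv0$, so $\alpha$ is an open portion of a ruling and $\kappa_{\alpha}\equiv0$, contradicting the hypothesis. Theorem \ref{teoremauno} then forces $\alpha$ to be rectifying or $\alpha(I)$ to lie on a sphere centered at the origin. The spherical alternative is impossible for a geodesic: it would make $\lVert\alpha\rVert$ constant, hence $\alpha\cdot\alpha'=0$ and $\alpha\cdot\alpha''=-1$, whereas $\mathbf{n}=\pm\mathbf{N}$ and $\alpha\cdot\mathbf{N}=0$ give $\alpha\cdot\alpha''=0$. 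Therefore $\alpha$ is rectifying.

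For the converse, suppose $\alpha$ is rectifying. By \eqref{curvarectificante}, $\lVert\alpha\times\alpha'\rVert=\tfrac1{|a|}$ is a nonzero constant, so by the Remark $u^{2}\bigl|\tfrac{dt}{ds}\bigr|\equiv\tfrac1{|a|}$; thus $\tfrac{dt}{ds}$ never vanishes, keeps a constant sign, and $c:=u^{2}\tfrac{dt}{ds}$ is a nonzero constant. Taking the cone over $\mathbf{y}=\alpha/\lVert\alpha\rVert$ (a regular spherical curve since $\alpha\times\alpha'\neq0$), I must show $\alpha''$ is orthogonal to $X_{u}=\mathbf{y}$ and $X_{t}=u\mathbf{y}'$; since $\alpha''\perp\alpha'=\tfrac{dt}{ds}X_{t}+\tfrac{du}{ds}X_{u}$ and $\tfrac{dt}{ds}\neq0$, it is enough to prove $\alpha''\cdot\mathbf{y}=0$. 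Differentiating $\alpha\cdot\mathbf{y}=u$ twice along $\alpha$ (using $\mathbf{y}\cdot\mathbf{y}'=0$, $\lVert\mathbf{y}'\rVert=1$) gives $\alpha''\cdot\mathbf{y}=u''-u\bigl(\tfrac{dt}{ds}\bigr)^{2}$; on the other hand, from $c=u^{2}\tfrac{dt}{ds}$ and the unit speed relation $\bigl(\tfrac{du}{ds}\bigr)^{2}+u^{2}\bigl(\tfrac{dt}{ds}\bigr)^{2}=1$ one gets $\bigl(\tfrac{du}{ds}\bigr)^{2}=1-c^{2}/u^{2}$, and differentiating this yields $u''=c^{2}/u^{3}=u\bigl(\tfrac{dt}{ds}\bigr)^{2}$ wherever $\tfrac{du}{ds}\neq0$. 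Since the rectifying parametrization $\alpha=(s+\tfrac ba)\mathbf{t}+\tfrac1a\mathbf{b}$ gives $\lVert\alpha\rVert^{2}=(s+\tfrac ba)^{2}+\tfrac1{a^{2}}$, the derivative $\tfrac{du}{ds}$ vanishes at most at one point, so $\alpha''\cdot\mathbf{y}=0$ everywhere by continuity and $\alpha$ is a geodesic.

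The individual computations here are routine; the genuine content — and the step I expect to demand the most care — is the handling of the degenerate cases introduced by the ``or'' in Theorem \ref{teoremauno} and by the cone structure: discarding rulings (where $\kappa_{\alpha}=0$), excluding the sphere-centered-at-the-origin branch for a geodesic, and promoting the first integral $u^{2}\tfrac{dt}{ds}=\text{const}$ to the full geodesic equation across the isolated zero of $\tfrac{du}{ds}$. Each of these rests on the hypothesis $\kappa_{\alpha}>0$ and on the explicit form of $\lVert\alpha\rVert$ for a rectifying curve.
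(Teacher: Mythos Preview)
Your argument is correct and follows the same overall architecture as the paper --- both directions pivot on the Remark's identity $\alpha\times\alpha'=-u^{2}\tfrac{dt}{ds}\,\mathbf N$ together with Theorem~\ref{teoremauno}. In the forward direction the paper writes $\alpha\times\alpha'=\delta\,\mathbf n$ and uses the Frenet equations to get $\delta'=0$, whereas you compute $\tfrac{d}{ds}\lVert\alpha\times\alpha'\rVert^{2}$ directly from $\alpha\times\mathbf N=u\,\mathbf y'$; these are equivalent one-line computations. Your exclusion of the spherical branch (the contradiction $\alpha\cdot\alpha''=-1$ versus $\alpha\cdot\alpha''=\kappa_\alpha\,\alpha\cdot\mathbf n=\pm\kappa_\alpha\,\alpha\cdot\mathbf N=0$) is in fact slicker than the paper's explicit geodesic-curvature calculation. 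The real difference is in the converse: you extract only the \emph{magnitude} $\lVert\alpha\times\alpha'\rVert=1/|a|$ from \eqref{curvarectificante}, then labor through the first integral $u^{2}\tfrac{dt}{ds}=c$ and the ODE $u''=c^{2}/u^{3}$, finally patching the isolated zero of $u'$ by continuity. The paper instead reads off the full vector identity $\alpha\times\alpha'=\tfrac{1}{a}\mathbf n$ from \eqref{curvarectificante} and compares it with the Remark's $\alpha\times\alpha'=-\tfrac{1}{a}\mathbf N$ (using Chen's explicit parametrization \eqref{parametrizacionrectificante} to evaluate $u^{2}\tfrac{dt}{ds}$), giving $\mathbf n=-\mathbf N$ in one line. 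Your route is sound but longer; the shortcut you missed is that \eqref{curvarectificante} already hands you the direction of $\alpha\times\alpha'$, not just its length.
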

\begin{proof}
 Let $\alpha$ be a geodesic curve with nonzero curvature parametrized by arc length, on the cone with vertex at the origin.
Then the normal vector $\textbf{N}$ is parallel to the normal vector $\textbf{n} $ of the curve $\alpha$ at the point $\alpha(s)$, this is $\textbf{N}=\pm\textbf{n}$. Therefore, we can write $\alpha\times\frac{d\alpha}{ds}=- u^2\frac{dt}{ds}\textbf{N}=\delta \textbf{n}$, for some differentiable function nonzero  $\delta=\delta(s)$.\\
Deriving the previous expression, we have $
\kappa_{\alpha}\alpha\times\textbf{n}=\frac{d\delta}{ds}\textbf{n}-\delta\kappa_{\alpha}\textbf{t}+\delta\tau_{\alpha}\textbf{b}$,
from this it follows that
\begin{equation*}
0=\kappa_{\alpha}<\alpha\times\textbf{n},\textbf{n}>=\frac{d\delta}{ds}<\textbf{n},\textbf{n}>-\delta\kappa_{\alpha}<\textbf{t},\textbf{n}>+\delta\tau_{\alpha}<\textbf{b},\textbf{n}>=\frac{d\delta}{ds},
\end{equation*}
therefore $\alpha\times\frac{d\alpha}{ds}=\delta_0 \textbf{n}$, for some nonzero constant $\delta_0=\mp u^2\frac{dt}{ds}$. This implies that $\alpha\times\frac{d\alpha}{ds}$ has constant magnitude nonzero, and by theorem \ref{teoremauno}, we have $\alpha$ is a rectifying curve \'{o} $\alpha(I)$ is contained in a sphere centered at the origin.\\
 If $\alpha$ is contained in a sphere centered at the origin, then $\alpha(s)=u_0\textbf{y}(t(s))$, by some nonzero constant $u_0.$ And this implies $u_0=\mid\delta_0\mid$, since the curve $\alpha$ is parametrized by arc length.\\ Now let $\kappa_g$ be the geodesic curvature of $\alpha$. Then
 \begin{eqnarray*}
 \kappa_g&=&<\frac{d\alpha'}{ds},\textbf{N}\times\frac{d\alpha}{ds}>=<u_0\frac{d^2t}{ds^2}\frac{d\textbf{y}}{dt}+u_0(\frac{dt}{ds})^2\frac{d^2\textbf{y}}{dt^2},(\frac{d\textbf{y}}{dt}\times \textbf{y})\times(u_0\frac{d\textbf{y}}{dt}\frac{dt}{ds})>\\
 &=&<u_0\frac{d^2t}{ds^2}\frac{d\textbf{y}}{dt}+u_0(\frac{dt}{ds})^2\frac{d^2\textbf{y}}{dt^2},u_0\frac{dt}{ds}\textbf{y}>
 =u_0^2(\frac{dt}{ds})^3<\frac{d^2\textbf{y}}{dt^2},\textbf{y}>=\frac{\pm1}{u_0},
 \end{eqnarray*}
 which is not given, since $\alpha$ is a geodesic, so $\alpha$ must be a rectifying curve.\\
 Reciprocally, suppose that $\alpha$ is a rectifying curve, taking into account the proof of theorem 3 of \cite{ChenBY:03}, without loss of generality we can assume that the curve is with arc length parameter.
  Then $\alpha$ is a curve, with curvature nonzero, given by
 \begin{equation}\label{parametrizacionrectificante}
 \alpha(s)=\frac{1}{a}\sqrt{1+(as+b)^2}\textbf{y}(c+\arctan{(as+b)})=u(s)\textbf{y}(t(s)),
 \end{equation}where $a$ and $b$ are constant, with $a$ positive and $\textbf{y}=\textbf{y}(t)$ is a curve with unit speed on the unit sphere centered at the origin.\\ Clearly $\alpha$ is contained in a cone with vertex at the origin and parametrized by $X(t,u)=u\textbf{y}(t),$ where $u\in\R^+$, so
 \begin{equation*} \alpha\times\frac{d\alpha}{ds}=\frac{1}{a}(\textbf{y}\times \frac{d\textbf{y}}{dt})=-\frac{1}{a}
\textbf{N},\end{equation*} where
 $\textbf{N}=\textbf{N}(t,u)$ is the normal vector of the cone along the curve $\alpha$. And
 taking into account the equation \ref{curvarectificante} that appears in the proof of the theorem \ref{teoremauno} we conclude that the normal vector $\textbf{n}$ of the curve $\alpha$ is parallel to the normal vector $\textbf{N}$ of the surface, this is $\alpha$ is a geodesic curve.
\end{proof}
Next we show that the only planar geodesics in a cone in space are portions of rulings.
\remark It is well known that a curve in $\R^3$ is called planar if it lives in a plane in $\R^3$, that is, if its torsion is zero.
%\newpage
\begin{theorem}
Let $\alpha$ be a geodesic curve on a cone, with vertex at the origin.\\
If $\alpha$ is a planar geodesic curve, then its curvature is equal to zero.
\end{theorem}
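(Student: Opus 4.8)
The plan is to argue by contradiction and reduce the statement to Theorem~\ref{cono}. Parametrize $\alpha$ by arc length and suppose $\alpha$ is a planar geodesic on a cone with vertex at the origin, yet its curvature $\kappa_{\alpha}$ is not identically zero. Choose $s_{0}$ with $\kappa_{\alpha}(s_{0})>0$; by continuity there is an open interval $J\ni s_{0}$ on which $\kappa_{\alpha}>0$. The restriction $\alpha|_{J}$ is still a geodesic of the same cone, now with strictly positive curvature, so Theorem~\ref{cono} applies and $\alpha|_{J}$ is a rectifying curve. One cannot apply Theorem~\ref{cono} to $\alpha$ globally, precisely because that theorem requires curvature everywhere positive; this localization to $J$ is the step I expect to need the most care, since the conclusion we want is the stronger $\kappa_{\alpha}\equiv 0$ rather than merely ``$\kappa_{\alpha}$ vanishes somewhere''.

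Next I would use the normal form of a rectifying curve already invoked in~(\ref{curvarectificante}): on $J$ one can write
\begin{equation*}
\alpha(s)=\Big(s+\tfrac{b}{a}\Big)\textbf{t}(s)+\tfrac{1}{a}\textbf{b}(s),\qquad a\neq 0 .
\end{equation*}
Differentiating this and using the Frenet equations $\textbf{t}'=\kappa_{\alpha}\textbf{n}$, $\textbf{b}'=-\tau_{\alpha}\textbf{n}$ together with $\alpha'=\textbf{t}$, the term $\textbf{t}(s)$ cancels and one is left with
\begin{equation*}
\Big[\Big(s+\tfrac{b}{a}\Big)\kappa_{\alpha}(s)-\tfrac{1}{a}\tau_{\alpha}(s)\Big]\textbf{n}(s)=0 ,
\end{equation*}
so, since $\textbf{n}(s)$ is a unit vector, $\tau_{\alpha}(s)=(as+b)\,\kappa_{\alpha}(s)$ for all $s\in J$.

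Finally, since $\alpha$ is planar its torsion vanishes, so $(as+b)\,\kappa_{\alpha}(s)=0$ for every $s\in J$. As $a\neq 0$, the affine function $s\mapsto as+b$ vanishes at most at the single point $s=-b/a$, whereas $\kappa_{\alpha}>0$ on all of the nondegenerate interval $J$; this is a contradiction. Hence $\kappa_{\alpha}(s_{0})>0$ can hold for no $s_{0}$, that is, $\kappa_{\alpha}\equiv 0$. (Equivalently, one may quote from \cite{ChenBY:03} that for a rectifying curve $\tau_{\alpha}/\kappa_{\alpha}$ is a nonconstant linear function of arc length, hence a rectifying curve is never planar on an interval, and then combine this with the localization above.)
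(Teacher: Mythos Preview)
Your argument is correct and follows essentially the same route as the paper: take the contrapositive, invoke Theorem~\ref{cono} to conclude the curve is rectifying, and then use the fact (Theorem~2 of \cite{ChenBY:03}) that a rectifying curve has $\tau_\alpha/\kappa_\alpha=as+b$ nonconstant, hence nonzero torsion, hence not planar. The only differences are that you localize carefully to an interval where $\kappa_\alpha>0$ and derive the torsion identity directly from the normal form rather than citing it, both of which add rigor without changing the strategy.
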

\begin{proof}
We can write the previous statement like this: If the curvature of $\alpha$ is greater than zero, then $\alpha$ is not a planar geodesic curve.
Thus by theorem \ref{cono} we have that $\alpha$ is a rectifying curve and by theorem 2 of \cite{ChenBY:03}, we have that its torsion must be different from zero. Therefore, $\alpha$ is not a planar geodesic curve.
\end{proof}
The following theorem allows us to classify the geodesic curves in a circular cone with vertex at the origin.
\begin{theorem}
Let $\alpha:I\rightarrow \R^3$ be a curve, with curvature greater than zero.
$\alpha$ is a geodesic curve on a circular cone with vertex at the origin if and only if $\alpha$ is a rectifying curve and it is at the same time slant helix.
\end{theorem}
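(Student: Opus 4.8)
The plan is to lean on Theorem \ref{cono}, which already identifies the geodesics of an arbitrary cone with vertex at the origin with the rectifying curves; so in each direction I only have to relate the \emph{circularity} of the cone with the slant--helix property of $\alpha$. The link in both directions is that along a geodesic the unit normal $\textbf{N}$ of the cone satisfies $\textbf{N}=\pm\textbf{n}$, and that $\textbf{N}$ is parallel to $\textbf{y}\times\frac{d\textbf{y}}{dt}$ (see the Remark), where $\textbf{y}$ is the unit--speed spherical directrix; moreover, in the rectifying parametrization \eqref{parametrizacionrectificante} one has exactly $\textbf{n}=\textbf{y}\times\frac{d\textbf{y}}{dt}$, as shown in the proof of Theorem \ref{cono}. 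As usual I reparametrize $\alpha$ by arc length.

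For the implication ``geodesic on a circular cone $\Rightarrow$ rectifying and slant helix'', Theorem \ref{cono} supplies the rectifying part, and I would obtain the slant--helix part as follows. Parametrize the circular cone by $X(t,u)=u\textbf{y}(t)$ with $\textbf{y}$ a unit--speed parametrization of the circle $C_{\textbf{y}}\cap S_0^2(1)$; if $\textbf{a}$ is the unit axis vector, then $\textbf{y}\cdot\textbf{a}=\cos\phi$ is constant, $\phi\in(0,\pi/2)$ being the half-angle (the degenerate value $\phi=\pi/2$ gives a flat surface and $\kappa_\alpha=0$, which is excluded). Differentiating $\textbf{y}\cdot\textbf{a}=\cos\phi$ and using the orthonormal frame $\{\textbf{y},\textbf{y}',\textbf{y}\times\textbf{y}'\}$ gives $\textbf{y}'\cdot\textbf{a}=0$, whence $\big((\textbf{y}\times\textbf{y}')\cdot\textbf{a}\big)^2=\sin^2\phi$; this scalar is continuous in $t$, so it is the constant $\pm\sin\phi$. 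Since $\alpha$ is a geodesic, $\textbf{n}=\pm\textbf{N}$ with a sign that is locally, hence globally, constant on the connected interval $I$, so $\textbf{n}\cdot\textbf{a}$ is constant: $\alpha$ is a slant helix, its fixed direction being the axis $\textbf{a}$ of the cone.

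For the converse, assume $\alpha$ is a rectifying curve and a slant helix with fixed direction $\textbf{d}$, so $\textbf{n}\cdot\textbf{d}=m$ is constant. By Theorem \ref{cono} and \eqref{parametrizacionrectificante}, $\alpha(s)=u(s)\textbf{y}(t(s))$ is a geodesic on $C_{\textbf{y}}$ and $\textbf{n}=\textbf{y}\times\frac{d\textbf{y}}{dt}$ along it, so $(\textbf{y}(t)\times\textbf{y}'(t))\cdot\textbf{d}=m$ on the interval $J=t(I)$ (note $t(s)=c+\arctan(as+b)$ is strictly monotone). Differentiating in $t$ and using $\textbf{y}''\perp\textbf{y}'$ one gets $(\textbf{y}\times\textbf{y}')'=\textbf{y}\times\textbf{y}''=\lambda(t)\,\textbf{y}'$ for a scalar function $\lambda$ (essentially the geodesic curvature of $\textbf{y}$ on the sphere), hence $\lambda(t)\,(\textbf{y}'\cdot\textbf{d})=0$ on $J$. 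The function $\lambda$ cannot vanish on any subinterval, for there $\textbf{y}\times\textbf{y}'$ would be constant, which forces $\alpha$ to lie in a plane through the origin on the corresponding arc of $I$, i.e.\ to be planar there; but a geodesic of positive curvature on a cone cannot be planar (the preceding theorem), a contradiction. Therefore $\textbf{y}'\cdot\textbf{d}=0$ on a dense subset of $J$, and by continuity of $\textbf{y}'$ on all of $J$; consequently $(\textbf{y}\cdot\textbf{d})'=0$, so $\textbf{y}\cdot\textbf{d}$ is a constant in $(-1,1)\setminus\{0\}$ (the endpoints are excluded by unit speed of $\textbf{y}$, the value $0$ because it would make $\lambda\equiv0$). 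Thus $\textbf{y}$ parametrizes an arc of a nondegenerate, non-great circle on $S_0^2(1)$, so $C_{\textbf{y}}$ is a circular cone with axis $\textbf{d}$, and $\alpha$ is a geodesic on it.

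The hard part is the converse, and within it the step that $\lambda$ vanishes on no subinterval: this is exactly where the hypotheses $\kappa_\alpha>0$ and ``rectifying'' are really consumed, to rule out the degenerate situation in which the directrix $\textbf{y}$ is a great circle and the surface is only a (half-)plane. A secondary subtlety, in the direct implication, is the connectedness argument needed to fix the sign in $\textbf{n}=\pm\textbf{N}$, so that $\textbf{n}\cdot\textbf{a}$ is genuinely constant and not merely of constant absolute value.
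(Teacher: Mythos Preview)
Your proof is correct, and it follows a genuinely different route from the paper's. The paper proceeds by a single explicit computation: starting from the rectifying parametrization \eqref{parametrizacionrectificante} and the Frenet equations (together with the relation $\tau/\kappa=as+b$ for rectifying curves), it evaluates both $\frac{d}{ds}\langle\textbf{y}(t(s)),U\rangle$ and $\frac{d}{ds}\langle\textbf{n}(s),U\rangle$ directly and shows they differ only by the nonvanishing scalar factor $-\,a\kappa(s)\,(1+(as+b)^2)^{-3/2}$. That one identity handles both implications symmetrically, without ever isolating the spherical geodesic curvature of $\textbf{y}$ or appealing to non-planarity. Your approach instead works through the identification $\textbf{n}=\textbf{y}\times\textbf{y}'$ and the orthonormal frame $\{\textbf{y},\textbf{y}',\textbf{y}\times\textbf{y}'\}$ of the directrix on the sphere: the forward direction becomes linear algebra in that frame, while the converse requires the additional step that the geodesic curvature $\lambda$ of $\textbf{y}$ cannot vanish on a subinterval, for which you correctly invoke the non-planarity of rectifying curves with $\kappa>0$. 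The paper's method buys brevity and symmetry; yours gives a clearer geometric picture and is, incidentally, more explicit about excluding the degenerate case $\textbf{y}\cdot\textbf{d}=0$ (great-circle directrix, half-plane ``cone''), which the paper's computation does not isolate.
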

\begin{proof}
By the previous theorem \ref{cono}, the two situations imply that $\alpha$ is a rectifying curve and therefore it can be written as: $ \alpha(s)=\frac{1}{a}\sqrt{1+(as+b)^2}\textbf{y}(t(s)),$ where $a$ and $b$ are constant, with $a$ positive and $\textbf{y}=\textbf{y}(t)$ is a curve with unit speed on the unit sphere centered at the origin.\\ Let us show that $\textbf{y}=\textbf{y}(t)$ forms a constant angle with a unit vector, fixed $U$ if and only if the normal vector $\textbf{n}=\textbf{n}(s)$ of the curve $\alpha$  also forms a constant angle with that same unit vector, fixed $U$.\\
Of course,
\begin{eqnarray*}
\frac{d}{ds}<\textbf{y}(t(s)),U>&=&\frac{d}{ds}<\frac{a\alpha(s)}{\sqrt{1+(as+b)^2}},U>\\
&=&a<\frac{(1+(as+b)^2)\textbf{t}(s)-a(as+b)\alpha(s)}{(1+(as+b)^2)^{3/2}},U>\\
&=&a<\frac{(1+(as+b)^2)\textbf{t}(s)-a(as+b)[(s+\frac{b}{a})\textbf{t}(s)+\frac{1}{a}\textbf{b}(s)]}{(1+(as+b)^2)^{3/2}},U>\\
&=&\frac{a<\textbf{t}(s),U>-a(as+b)<\textbf{b}(s),U>}{(1+(as+b)^2)^{3/2}},
\end{eqnarray*} Analogously, taking into account the  Frenet's equations and  theorem 2 of \cite{ChenBY:03}, we have
\begin{eqnarray*}
\frac{d}{ds}<\textbf{n}(s),U>&=&-\kappa(s)<\textbf{t}(s),U>+\tau(s)<\textbf{b}(s),U>\\
&=&-\kappa(s)<\textbf{t}(s),U>+\kappa(s)(as+b)<\textbf{b}(s),U>.
\end{eqnarray*}
This is:
\begin{equation}\label{Clasificación}
\frac{(1+(as+b)^2)^{3/2}}{a}\frac{d}{ds}<\textbf{y}(t(s)),U>=-\frac{1}{\kappa(s)}\frac{d}{ds}<\textbf{n}(s),U>,
\end{equation}
therefore
$\frac{d}{ds}<\textbf{y}(t(s)),U>=0$ if and only if $\frac{d}{ds}<\textbf{n}(s),U>=0$. And this proves the theorem.
\end{proof}
The following theorem shows us the parametrization by arc length of the
curve that satisfies the property of being a rectifying curve and slant helix at the
same time.
\begin{theorem}\label{TeoremaParametrización}
Let $\alpha$ be a curve parametrized by arc length, with curvature greater than zero.
If $\alpha$ is a rectifying curve and it is at the same time slant helix, then it can be parametrized as
\begin{equation*}
\alpha(s)=\frac{1}{a}\sqrt{1+(as+b)^2}(\sin\psi_0\cos{[\frac{c+\arctan{(as+b)}}{\sin\psi_0}]},
\sin\psi_0\sin{[\frac{c+\arctan{(as+b)}}{\sin\psi_0}]}, \cos\psi_0),
\end{equation*}
 where $a,b,c,\psi_0$ are real constants, $0<a,\ 0<\psi_0<\frac{\pi}{2}$.
\end{theorem}
\begin{proof}
The result is derived from the previous theorem, and considering the expression of the rectifying curve given by \ref{parametrizacionrectificante} , where \textbf{y} is the circumference defined as \begin{equation*}\textbf{y}(t)=(\sin\psi_0\cos{[\frac{t}{\sin\psi_0}]},
\sin\psi_0\sin{[\frac{t}{\sin\psi_0}]}, \cos\psi_0).\end{equation*}
\end{proof}
%\newpage
%%%%%%%%%%%%%%%%%%%%%%%%%%%%%%%%%%%%%%%%%%%%%%%%%%%%%%%%%%%%%%%%%%%%%%%%%%%%%%%%%%%%
%% The correct journal style for \specialsection is all uppercase; a known bug
%% in amsart.cls prevents this, so input must be uppercase until it is fixed.
%\specialsection*{This is a Special Section Head}
%%%%%%%%%%%%%%%%%%%%%%%%%%%%%%%%%%%%%%%%%%%%%%%%%%%%%%%%%%%%%%%%%%%%%%%%
%\footnote{Here is an example of a footnote. Notice that this footnote
%text is running on so that it can stand as an example of how a footnote
%with separate paragraphs should be written.
%\par
%And here is the beginning of the second paragraph.}%
%%%%%%%%%%%%%%%%%%%%%%%%%%%%%%%%%%%%%%%%%%%%%%%%%%%%%%%%%%%%%%%%%%%%%%%%
%\newpage
\section{Conclusion.}
\begin{enumerate}
\item The theorem \ref{teoremauno} which gives the necessary and sufficient conditions that a curve in space
 must satisfy: to be rectifying curve or that its trace is contained in the sphere centered at the origin,
 allows us to give a different proof of theorem 1 that appears in \cite{BANG-Y-C:07}.
\item The equation \ref{Clasificación} was important to classify geodesic curves in a circular cone with vertex at the origin. This equation also allows to classify
geodesics in arbitrary cones.
\end{enumerate}
%\newpage

%%%%%%%%%%%%%%%%%%%%%%%%%%%%%%%%%%%%%%%%%%%%%%%%%%%%%%%%%%%%%%%%%%%%%%%%%%%
\bibliographystyle{amsplain}

\begin{thebibliography}{1}
\bibitem{ChenBY:03} Chen, B. Y., \textit{When does the position vector of a space curve always lie in its rectifying plane?.} Amer. Math. Monthly. \textbf{110}, 147--152, 2003.
\bibitem{IzTa:04} Izumiya S., Takeuchi N., \textit{New special curves and developable surfaces.} Turk. J. Math. \textbf{28}, 153--163, 2004.
\bibitem{BAFKALK:05} Bulent A., Ferda K., Aksoyak, Levent K., Cahit A., \textit{On rectifying slant helices in euclidean 3-space.} Konuralp Journal of Mathematics. vol. 4 No. 2, 17--24, 2016.
\bibitem{BANG-Y-C:07} Chen, B. Y., \textit{Rectifying curves and geodesics on a cone in the euclidean 3-space}. Tamkang Journal of Mathematics. vol. 48 No. 2, 209--214, 2017.
\end{thebibliography}

\end{document}